\title{Bayesimax Theory:\\ Selecting Priors by Minimizing Total Information}
\author{Sitaram Vangala\thanks{Department of Medicine Statistics Core, David Geffen School of Medicine, University of California, Los Angeles, Los Angeles, CA. E-mail: \texttt{svangala@mednet.ucla.edu}.}}
\date{September 7, 2025}
\theoremstyle{plain}
\newtheorem{theorem}{Theorem}[section]
\newtheorem{lemma}[theorem]{Lemma}
\newtheorem{proposition}[theorem]{Proposition}
\newtheorem{corollary}[theorem]{Corollary}
\theoremstyle{definition}
\newtheorem{definition}[theorem]{Definition}
\newtheorem{assumption}[theorem]{Assumption}
\theoremstyle{remark}
\newtheorem{remark}[theorem]{Remark}
\newcommand{\E}{\mathbb{E}}
\newcommand{\R}{\mathbb{R}}
\newcommand{\KL}{\mathrm{KL}}
\begin{document}
\maketitle

\begin{abstract}
We introduce \emph{Bayesimax theory}, a paradigm for objective Bayesian analysis which selects priors by applying minimax theory to \emph{prior disclosure} games. In these games, the uniquely optimal strategy for a Bayesian agent upon observing the data is to reveal their prior. As such, the prior chosen by minimax theory is, in effect, the implicit prior of minimax agents. Since minimax analysis disregards prior information, this prior is arguably noninformative. We refer to minimax solutions of certain prior disclosure games as \emph{Bayesimax priors}, and we classify a statistical procedure as Bayesimax if it is a Bayes rule with respect to a Bayesimax prior.\par
Under regular conditions, if a decision rule is minimax, then it is a Bayes rule under priors which maximize the minimum Bayes risk. We study games leveraging strictly proper scoring rules to induce posterior (and thereby prior) revelation. In such games, the minimum Bayes risk equals the conditional (generalized) entropy of the parameter given the data. Bayesimax theory therefore prescribes conditional entropy maximization. As conditional entropy equals marginal entropy (prior uninformativeness) minus mutual information (data informativeness), Bayesimax priors effectively minimize \emph{total} information.\par
We provide a rigorous formulation of these ideas, characterize sufficient conditions for regularity and identifiability, and investigate asymptotics and conjugate family examples. We next describe a generic Monte Carlo algorithm for estimating conditional entropy under a given prior. Finally, we compare and contrast Bayesimax theory with various related proposals from the objective Bayes and robust Bayes literature.
\end{abstract}

\section{Introduction}\label{sec:intro}

\paragraph{Context and motivation.}
Objective Bayesian analysis seeks priors minimizing subjective influence while preserving coherent \emph{a posteriori} decision-making. Canonical proposals include \emph{Jeffreys priors} for parameterization invariance \citep{jeffreys1946}, \emph{maximum entropy priors} for least-committal specification under constraints \citep{jaynes1957}, \emph{probability matching priors} for frequentist calibration of posterior quantiles \citep{welch1963, datta2005}, and \emph{reference priors} that maximize the information contributed by the data \citep{bernardo1979, bergerbernardo1992, kasswasserman1996, bergersun2009, consonni2018}. In a parallel line of research, statistical decision theory formalizes rational post-data choice via Bayes and minimax decision rules, with foundational treatments in \citet{wald1950}, \citet{ferguson1967}, and \citet{berger1985}. A classical bridge between these traditions utilizes \emph{least favorable priors}, under which Bayes rules are minimax with respect to a given loss function. 

Existing objective Bayes devices typically optimize either a prior-centric criterion (e.g., invariance or entropy) or a data-centric criterion (e.g., mutual information) in pursuit of noninformativeness, rather than reverse-engineering optimal noninformative post-data behavior. This paper proposes such a device, and characterizes the class of priors it rationalizes.

\paragraph{Prior disclosure games and Bayesimaxity.}
We introduce \emph{Bayesimax theory}, which selects priors by applying minimax analysis to \emph{prior disclosure} games. A prior disclosure game is a post-data decision problem in which, upon observing the data, the uniquely optimal strategy for a Bayesian is to report their prior. Accordingly, the minimax solution of such a game may be interpreted as the implicit prior of minimax theory. This prior is arguably noninformative in the sense that it reverse-engineers a decision principle that disregards prior information. We term minimax solutions of certain prior disclosure games \emph{Bayesimax priors}, and we say that a statistical procedure is \emph{Bayesimax} if it is a Bayes rule with respect to a Bayesimax prior.

In this paper, we study a natural and tractable subclass of prior disclosure games that leverage \emph{strictly proper scoring rules}. Such scoring rules incentivize truthful probabilistic statements. We consider decision problems in which the choice set is a space of priors, and the loss function applies a strictly proper scoring rule to the induced posterior. In such problems, a Bayesian minimizes posterior expected loss by selecting their own prior. Under a standard identifiability condition, this truth-telling solution is unique. See \citet{gneiting2007} for a general treatment of strictly proper scoring rules and \citet{dawid2012, dawid2015} for posterior/local scoring and decision-theoretic uses.

\paragraph{Risk maximization and entropy.}
Let $\Theta$ denote the parameter with prior $\pi$, $X$ data from a dominated experiment $\{P_\theta:\theta\in\Omega\}$, $m^\pi$ the prior-predictive distribution of $X$, and $\Pi_x^\pi$ the posterior under $\pi$ given $X = x$. In a disclosure game employing a strictly proper score $S$ on distributions over $\Omega$, the loss at $\Theta = \theta$ for decision rule $\delta$ is $-S(\Pi_x^{\delta(x)}, \theta)$. The \emph{Bayes risk} of $\delta$ under $\pi$ is the posterior expected loss for $\delta(X)$, averaged with respect to $m^\pi$. In such games, the \emph{minimum Bayes risk} (over decision rules) under $\pi$ equals the \emph{conditional (generalized) entropy},
\[
\E_{X \sim m^\pi}\!\big[\,H_S(\Pi_X^\pi)\,\big],
\]
where $H_S$ is the generalized entropy associated with $S$ \citep{gneiting2007}. Under regular conditions, if a decision rule is minimax, then it is a Bayes rule under priors which maximize the minimum Bayes risk. Hence, in games featuring a strictly proper scoring rule, minimax solutions maximize conditional entropy. For the logarithmic score in particular, $H_S$ is Shannon entropy ($H$), which recalls the following decomposition \citep{coverthomas2006}:
\[
\underset{\text{(conditional entropy)}}{H(\Theta\mid X)} \;=\; \underset{\text{(marginal entropy)}}{H(\Theta)}\;-\;\underset{\text{(mutual information)}}{I(\Theta;X).}
\]
Hence, a Bayesimax prior minimizes \emph{total} information by maximizing prior uninformativeness, $H(\Theta)$, and minimizing data informativeness, $I(\Theta;X)$. Intuitively, minimax theory selects a prior that, on average, results in a posterior as diffuse as the model tolerates.

\paragraph{Positioning within objective Bayes and robust Bayes literature.}
The Bayesimax criterion recalls, but differs qualitatively from, existing proposals. Maximum entropy priors maximize \emph{prior} uncertainty \citep{jaynes1957}, while ignoring informativeness of the data. Reference priors \emph{maximize}, rather than minimize, data informativeness \citep{bernardo1979, bergerbernardo1992, bergersun2009}, while ignoring prior uncertainty. In contrast, Bayesimax priors minimize information deriving from both the prior and the data. The justification for this criterion is conceptually adjacent to robust Bayes and $\Gamma$-minimax ideas \citep{berger1985, berger1994robust, insua2000}, yet distinct: rather than minimizing worst-case risk in statistical problems, Bayesimax priors do so only in prior disclosure games. A statistical procedure is Bayesimax so long as it is fully Bayesian under a Bayesimax prior, regardless of whether the procedure itself is minimax.

\paragraph{Contributions and paper organization.}
Section~\ref{sec:prelim} introduces notation and provides background on strictly proper scoring rules, including the generalized entropy $H_S$ and its associated divergence. Section~\ref{sec:game} formalizes prior disclosure games and demonstrates our main results. Section~\ref{sec:existence} outlines sufficient conditions for regularity and identifiability. Section~\ref{sec:asymp} studies asymptotics for the logarithmic score in regular finite-dimensional models. Section~\ref{sec:examples} illustrates Bayesimax analysis in conjugate families. Section~\ref{sec:practice} describes a generic Monte Carlo algorithm for estimating conditional entropy under a given prior. Section~\ref{sec:discussion} concludes by revisiting competing proposals from the objective Bayes and robust Bayes literature.

\section{Preliminaries: notation and background}\label{sec:prelim}
Let $(\Omega,\mathcal{T})$ be a standard Borel parameter space and let $\mathcal{P}=\{P_\theta:\theta\in\Omega\}$ be a dominated statistical experiment on measurable space $(\mathcal{X},\mathcal{A})$ with $\sigma$-finite dominating measure $\lambda$. For $\pi$ a Borel probability measure on $(\Omega,\mathcal{T})$, write the prior predictive (marginal) as
$\mathrm{d}m^\pi/\mathrm{d}\lambda = \int_\Omega p_\theta(x)\,\mathrm{d}\pi$,
where $p_\theta=\mathrm{d}P_\theta/\mathrm{d}\lambda$.
Assume Bayes posteriors $\Pi_x^\pi$ exist for $m^\pi$-almost all $x$ and are measurable in $x$. Let $\Lambda$ be a space of probability measures on $(\Omega,\mathcal{T})$.

\begin{definition}[Strictly proper scoring rules and generalized entropy]\label{def:score}
A scoring rule $S$ assigns to each probability measure $Q$ on $(\Omega,\mathcal{T})$ a measurable function $S(Q,\cdot):\Omega\to\R\cup\{-\infty\}$. It is \emph{proper} if $\E_{Y\sim P}[S(P,Y)]\ge \E_{Y\sim P}[S(Q,Y)]$ for all $P,Q$; \emph{strictly proper} if equality implies $Q=P$. The associated \emph{generalized entropy} is $H_S(P)=-\E_{Y\sim P}[S(P,Y)]$. The \emph{score divergence} is $D_S(P\Vert Q)=\E_{Y\sim P}[S(P,Y)-S(Q,Y)]\ge 0$, with equality iff $P=Q$.
\end{definition}

\begin{remark}
For the logarithmic score $S(Q,y)=\log q(y)$ (density w.r.t.\ Lebesgue measure), $H_S$ is differential Shannon entropy, $H$, and $D_S$ is Kullback--Leibler divergence, $\KL$. For quadratic/Brier scores one recovers $L^2$ entropies and divergences; for spherical/pseudospherical scores, corresponding norms induce the entropy \citep{gneiting2007}.
\end{remark}

\section{Prior disclosure games and Bayesimax priors}\label{sec:game}
Fix a strictly proper score $S$ on probability measures over $(\Omega,\mathcal{T})$.
Define a \emph{prior disclosure game} as follows.
A \emph{decision rule} is a measurable selector $\delta:\mathcal{X}\to\Lambda$; upon observing $X=x$, the player selects a probability measure $\delta(x)\in\Lambda$.
The \emph{loss} at $(\theta,x,a)$ for $a \in \Lambda$ is
\begin{equation}\label{eq:loss}
\ell_S(\theta,x,a) \;=\; -\, S\big(\Pi_x^{a},\,\theta\big).
\end{equation}
The \emph{frequentist risk} of $\delta$ when $\Theta = \theta$ is
\begin{equation}\label{eq:freqrisk}
\mathcal{R}_S(\delta, \theta) \;=\; \E_{X\sim P_\theta}\big[\ell_S(\theta,X,\delta(X))\big].
\end{equation}
For a prior $\pi\in\Lambda$, the \emph{Bayes risk} of $\delta$ is
\begin{equation}
R_S(\delta,\pi) \;=\; \E_{\Theta\sim\pi}\big[\mathcal{R}_S(\delta, \Theta)\big]
\;=\; -\,\E_{X\sim m^\pi}\Big[ \E_{\Theta\sim\Pi_X^\pi} \Big[S\big(\Pi_X^{\delta(X)},\Theta\big)\Big]\Big].
\end{equation}
Define the \emph{minimum Bayes risk} of prior $\pi$ as $r_S(\pi)=\inf_{\delta} R_S(\delta,\pi)$. $\delta$ is a \emph{Bayes rule} under $\pi$ if $R_S(\delta,\pi) = r_S(\pi)$.
$\delta^M$ is \emph{minimax} if
\begin{equation}
\sup_{\theta \in \Omega}\mathcal{R}_S(\delta^M, \theta) \;=\; \inf_\delta \sup_{\theta \in \Omega}\mathcal{R}_S(\delta, \theta).
\end{equation}
A prior $\pi$ is \emph{least favorable} if it admits a Bayes rule $\delta_\pi$ such that for all $\theta \in \Omega$, $\mathcal{R}_S(\delta_\pi,\theta) \leq r_S(\pi)$. 

\begin{assumption}[Prior identifiability]\label{ass:ident}
For $m^\pi$-a.e. $x$, the map $a \mapsto \Pi^a_x$ is injective on $\Lambda$.
\end{assumption}

\begin{proposition}[Truth-telling for Bayesians]\label{thm:truthtelling}
Under assumption~\ref{ass:ident}, for any prior $\pi\in\Lambda$ and any strictly proper score $S$, Bayes rules for the prior disclosure game select $\pi$ almost surely:
\begin{equation}
\delta^\star(x)\equiv \pi\qquad (m^\pi\text{-a.s. }x),
\end{equation}
and the minimum Bayes risk equals
\begin{equation}\label{eq:bayesvalue}
r_S(\pi) = \E_{X\sim m^\pi}\Big[ H_S\big(\Pi_X^\pi\big)\Big].
\end{equation}
\end{proposition}

\begin{proof}
For each $x$, strict propriety of $S$ implies that
$\E_{\Theta\sim \Pi_x^\pi} \Big[ S\big(\Pi_x^\pi,\Theta\big) \Big] \ge \E_{\Theta\sim \Pi_x^\pi} \Big[ S\big(Q,\Theta\big) \Big]$
with equality iff $Q=\Pi_x^\pi$. Taking $Q=\Pi_X^{\delta(X)}$ and integrating over $X \sim m^\pi$ yields that a pointwise Bayes risk minimizer is $\delta(x)=\pi$, and the minimum is \eqref{eq:bayesvalue}. Identifiability ensures that any rule achieving the minimum $m^\pi$-a.s.\ must coincide with $\delta^\star$.
\end{proof}

\begin{proposition}[Conditional (generalized) entropy decomposition]\label{prop:decomp}
For any $\pi\in\Lambda$, $r_S(\pi)$ admits the decomposition
\begin{equation}
r_S(\pi)=H_S(\pi) - I_S(\pi),\qquad
I_S(\pi)=\E_{X\sim m^\pi}\Big[ D_S\big(\Pi_X^\pi\Vert \pi\big)\Big]\ge 0.
\end{equation}
\end{proposition}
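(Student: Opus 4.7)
The plan is to start from the identity $r_S(\pi)=\E_{X\sim m^\pi}[H_S(\Pi_X^\pi)]$ already furnished by Theorem~\ref{thm:truthtelling}, then rewrite the integrand in a form that separates the marginal entropy of $\pi$ from a divergence term. The key algebraic observation is that, for each $x$, the definitions of $H_S$ and $D_S$ together give
\[
H_S(\Pi_x^\pi) \;=\; -\,D_S\!\big(\Pi_x^\pi\Vert\pi\big) \;-\; \E_{\Theta\sim \Pi_x^\pi}\!\big[S(\pi,\Theta)\big],
\]
since $\E_{\Theta\sim \Pi_x^\pi}[S(\Pi_x^\pi,\Theta)]=-H_S(\Pi_x^\pi)$ and $D_S(\Pi_x^\pi\Vert\pi)$ is the difference of this quantity and $\E_{\Theta\sim \Pi_x^\pi}[S(\pi,\Theta)]$. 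Integrating both sides against $m^\pi$ produces $r_S(\pi)=-I_S(\pi)-\E_{X\sim m^\pi}\E_{\Theta\sim \Pi_X^\pi}[S(\pi,\Theta)]$.

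Next I would invoke the defining property of the posterior: by disintegration of the joint law of $(\Theta,X)$ (existence of which is assumed in Section~\ref{sec:prelim}), for any measurable function $g$ of $\Theta$ alone with $\E_{\Theta\sim\pi}|g(\Theta)|<\infty$, one has $\E_{X\sim m^\pi}\E_{\Theta\sim \Pi_X^\pi}[g(\Theta)]=\E_{\Theta\sim\pi}[g(\Theta)]$. Applied with $g(\Theta)=S(\pi,\Theta)$, this collapses the cross term to $\E_{\Theta\sim\pi}[S(\pi,\Theta)]=-H_S(\pi)$, yielding $r_S(\pi)=H_S(\pi)-I_S(\pi)$. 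Non-negativity of $I_S(\pi)$ is then immediate from propriety of $S$, which guarantees $D_S(\,\cdot\Vert\,\cdot)\ge 0$ pointwise, and monotonicity of expectation.

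The main obstacle is integrability: both steps treat expectations of possibly unbounded, sign-indefinite quantities (generalized entropies and scores can take the value $-\infty$, per Definition~\ref{def:score}), so Fubini/tower applications require a mild moment condition such as $\E_{\Theta\sim\pi}|S(\pi,\Theta)|<\infty$ and $\E_{X\sim m^\pi}H_S(\Pi_X^\pi)<\infty$. I would state these as standing integrability hypotheses (consistent with the regularity conditions already promised in the introduction) so that the rearrangement above is justified termwise; otherwise the identity should be read in $[-\infty,\infty]$ with the usual convention $\infty-\infty$ excluded. Measurability of $x\mapsto H_S(\Pi_x^\pi)$ and $x\mapsto D_S(\Pi_x^\pi\Vert\pi)$ follows from the measurability of $x\mapsto \Pi_x^\pi$ assumed in Section~\ref{sec:prelim} together with standard arguments, so no further regularity is needed.
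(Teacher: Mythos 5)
Your argument is correct and follows essentially the same route as the paper's proof: decompose $H_S(\Pi_x^\pi)$ via the definitions of $H_S$ and $D_S$ into $-\E_{\Theta\sim\Pi_x^\pi}[S(\pi,\Theta)]-D_S(\Pi_x^\pi\Vert\pi)$, then integrate over $X\sim m^\pi$ and apply the tower property to collapse the cross term to $-H_S(\pi)$. Your added attention to integrability and measurability is a reasonable refinement but does not change the substance of the argument.
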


\begin{proof}
By definition \ref{def:score}, for each $x$,
$H_S(\Pi_x^\pi)=-\E_{\Theta\sim\Pi_x^\pi} \Big[ S(\Pi_x^\pi,\Theta) \Big]
= -\E_{\Theta\sim\Pi_x^\pi} \Big[ S(\pi,\Theta) \Big] - D_S(\Pi_x^\pi\Vert\pi)$.
Integrating over $X\sim m^\pi$ and applying the tower property gives
$r_S(\pi) = H_S(\pi) - I_S(\pi)$.
\end{proof}

\begin{definition}[Bayesimax priors and procedures]\label{def:bayesimax}
A \emph{Bayesimax prior} is any maximizer
\begin{equation}
\pi^\star \in \arg\max_{\pi\in\Lambda} r_S(\pi) = \arg\max_{\pi\in\Lambda} \E_{X\sim m^\pi}\big[H_S(\Pi_X^\pi)\big],
\end{equation}
for some strictly proper score $S$. A \emph{Bayesimax procedure} for a statistical decision problem is a Bayes rule under $\pi^\star$ for that problem. If the maximum is not realized, a sequence $(\pi_i)$ with $r_S(\pi_i)\uparrow\sup_{\pi \in \Lambda} r_S(\pi)$ is a \emph{nearly Bayesimax sequence}.
\end{definition}

\begin{lemma}[Bayes-minimax]\label{lm:lfminimax}
Let $\pi$ be a least favorable prior. Then $\delta_\pi$ is minimax, and every minimax rule is a Bayes rule under $\pi$.
\end{lemma}

\begin{proof}
Suppose $\delta_\pi$ is not minimax. Then there is a $\delta$ such that $\sup_{\theta \in \Omega}\mathcal{R}_S(\delta, \theta) < \sup_{\theta \in \Omega}\mathcal{R}_S(\delta_\pi, \theta)$. Since $\pi$ is least favorable, and $R_S(\delta, \pi) \leq \sup_{\theta \in \Omega}\mathcal{R}_S(\delta, \theta)$, we have $R_S(\delta, \pi) <r_S(\pi)$ (contradiction). Hence, $\delta_\pi$ is minimax. Next, let $\delta$ be minimax. Then $\sup_{\theta \in \Omega}\mathcal{R}_S(\delta, \theta) = \sup_{\theta \in \Omega}\mathcal{R}_S(\delta_\pi, \theta)$. Since $\pi$ is least favorable, and $R_S(\delta, \pi) \leq \sup_{\theta \in \Omega}\mathcal{R}_S(\delta, \theta)$, it follows that $R_S(\delta, \pi) \leq r_S(\pi)$. Hence, every minimax rule is a Bayes rule under $\pi$.
\end{proof}

\begin{assumption}[Least favorable regularity]\label{ass:lfreg}
There exists a $\pi_m \in \Lambda$ which is least favorable with respect to the prior disclosure game.
\end{assumption}

\begin{proposition}[Minimax characterization in the disclosure game]\label{thm:minimax}
Under assumption~\ref{ass:lfreg}, all and only Bayesimax priors are least favorable for the disclosure game. Consequently, every minimax rule in the disclosure game is a Bayes rule under every Bayesimax prior.
\end{proposition}

\begin{proof}
For any $\pi \in \Lambda$, $r_S(\pi) \leq R_S(\delta_{\pi_m}, \pi) \leq \sup_{\theta \in \Omega}\mathcal{R}_S(\delta_{\pi_m}, \theta)$. Suppose $\pi_m$ is not Bayesimax. Then there is a $\pi \in \Lambda$ such that $r_S(\pi_m) < r_S(\pi)$, in which case we have $r_S(\pi_m) < \sup_{\theta \in \Omega}\mathcal{R}_S(\delta_{\pi_m}, \theta)$ (contradiction). It follows that every least favorable prior is Bayesimax. In particular, if $\pi$ is Bayesimax, then $r_S(\pi_m) = r_S(\pi)$. Since $\pi_m$ is least favorable, $r_S(\pi) \leq R_S(\delta_{\pi_m}, \pi) \leq \sup_{\theta \in \Omega}\mathcal{R}_S(\delta_{\pi_m}, \theta) \leq r_S(\pi)$. It follows that every Bayesimax prior is least favorable. Accordingly, lemma~\ref{lm:lfminimax} entails that every minimax rule in the disclosure game is a Bayes rule under every Bayesimax prior. 
\end{proof}

\begin{corollary}[Minimax solution]\label{cor:minimax}
Under assumptions~\ref{ass:ident} and \ref{ass:lfreg}, if a decision rule $\delta$ is minimax in the disclosure game, then for all Bayesimax priors $\pi$, $\delta(x) = \pi\text{ }(m^\pi\text{-a.s. }x)$.
\end{corollary}

\begin{proof}
This follows immediately from propositions~\ref{thm:truthtelling} and \ref{thm:minimax}.
\end{proof}

\section{Identifiability and regularity}\label{sec:existence}

We now present sufficient conditions for assumptions~\ref{ass:ident} and \ref{ass:lfreg}.

\subsection*{Sufficient conditions for prior identifiability}

\begin{lemma}[Common domination with negligible zeros]\label{thm:dom_ident}
Suppose there is a $\sigma$-finite measure $\nu$ where, for all $a \in \Lambda$, $a \ll \nu$. If for $m^\pi$-a.e.\ $x$, $p_\theta(x)>0$ for $\nu$-a.e.\ $\theta$, then assumption~\ref{ass:ident} holds.
\end{lemma}

\begin{proof}
Consider $a_1, a_2 \in \Lambda$ and $x$ in the $m^\pi$-full set, where $\Pi_x^{a_1}=\Pi_x^{a_2}$. Since $a_1, a_2 \ll \nu$, it follows that $\Pi_x^{a_1}, \Pi_x^{a_2} \ll \nu$. Any density of $\Pi_x^{a_1}$ (with respect to $\nu$) is also a density of $\Pi_x^{a_2}$, and is therefore proportional to both $p_\theta(x)\frac{\mathrm{d}a_1}{\mathrm{d}\nu}(\theta)$ and $p_\theta(x)\frac{\mathrm{d}a_2}{\mathrm{d}\nu}(\theta)$ for $\nu$-a.e.\ $\theta$. Since $p_\theta(x)>0$ $\nu$-a.e., it follows that $\frac{\mathrm{d}a_1}{\mathrm{d}\nu}(\theta) \propto \frac{\mathrm{d}a_2}{\mathrm{d}\nu}(\theta)$ $\nu$-a.e., meaning $a_1 = a_2$.
\end{proof}

\subsection*{Sufficient conditions for least favorable regularity}

We first note a pair of jointly sufficient conditions for assumption~\ref{ass:lfreg}.

\begin{lemma}[Supremum identity and saddle point existence]\label{prop:lfp_suff}
Suppose (i) $\sup_{\theta\in\Omega}\,\mathcal{R}_S(\delta,\theta) \;=\; \sup_{\pi\in\Lambda} R_S(\delta,\pi)$ for all decision rules $\delta$, and (ii) there exists a saddle point $(\delta^\star,\pi^\star)$ with $R_S(\delta^\star,\pi)\le R_S(\delta^\star,\pi^\star)\le R_S(\delta,\pi^\star)$ for all decision rules $\delta$ and $\pi \in \Lambda$. Then assumption~\ref{ass:lfreg} holds.
\end{lemma}

\begin{proof}
Since $R_S(\delta^\star,\pi^\star)\le R_S(\delta,\pi^\star)$ for all $\delta$, $\delta^\star$ is a Bayes rule under $\pi^\star$. Combining (i) and (ii), we have $\underset{\theta \in \Omega}{\sup}\mathcal{R}_S(\delta^\star,\theta)\;=\;\underset{\pi \in \Lambda}{\sup}R_S(\delta^\star,\pi)\;=\;R_S(\delta^\star,\pi^\star)\;=\;r_S(\pi^\star)$. Hence, $\pi^\star$ is least favorable.
\end{proof}

\begin{remark}
A convenient way to guarantee (i) is to require that $\Lambda$ includes all point masses $\{\delta_\theta:\theta\in\Omega\}$, or includes them in its weak closure together with upper semicontinuity of $\pi\mapsto R_S(\delta,\pi)$. In that case, we have $\sup_{\theta\in\Omega}\mathcal{R}_S(\delta,\theta)\;=\; \sup_{\theta\in\Omega} R_S(\delta,\delta_\theta)\;=\;\sup_{\pi\in\Lambda} R_S(\delta,\pi)$.
\end{remark}

\noindent We now present sufficient conditions for (ii). Throughout, $\Lambda$ is endowed with the weak topology.

\begin{lemma}[Saddle points via compactness and semicontinuity]\label{prop:sion}
Suppose
\begin{enumerate}
\item[(a)] $\Lambda$ is convex and weakly compact (e.g., $\Omega$ compact, or $\Lambda$ tight and weakly closed).
\item[(b)] For each decision rule $\delta$, $\pi\mapsto R_S(\delta,\pi)$ is upper semicontinuous and affine on $\Lambda$.
\item[(c)] For each $\pi\in\Lambda$, the map $\delta\mapsto R_S(\delta,\pi)$ is lower semicontinuous and convex on the convex set of (randomized) decision rules.
\item[(d)] For each $\pi\in\Lambda$, $\inf_\delta R_S(\delta,\pi)$ is realized (e.g., by the truth-telling rule; proposition~\ref{thm:truthtelling}).
\end{enumerate}
Then there exists a saddle point $(\delta^\star,\pi^\star)$.
\end{lemma}

\begin{proof}[Proof sketch]
Given (a)–(c), Sion's minimax theorem \citep{sion1958} yields $\inf_\delta\sup_{\pi \in \Lambda} R_S(\delta,\pi)\;=\;\sup_{\pi \in \Lambda}\inf_\delta R_S(\delta,\pi)$, and since $\pi\mapsto R_S(\delta,\pi)$ is upper semicontinuous on compact $\Lambda$, the supremum over $\pi$ is realized. Given (d), the infimum over $\delta$ is attained pointwise in $\pi$. Standard saddle-point arguments \citep{wald1950, ferguson1967, berger1985} then give $(\delta^\star,\pi^\star)$.
\end{proof}

\begin{remark}[How to check (a)–(d) in this setting]
A dominated model and strictly proper scoring rule suffice for measurability and finiteness of risks. Condition (b) holds because $R_S(\delta,\pi)$ is affine in $\pi$ and upper semicontinuous under weak convergence when $\theta\mapsto \mathcal{R}_S(\delta,\theta)$ is bounded above and upper semicontinuous (by the portmanteau theorem or a dominated-convergence envelope). Condition (c) is the usual convexity/l.s.c.\ of risk in the (randomized) decision rule; for strictly proper scores (including the logarithmic score), $-S$ is convex in its distribution argument, and convexity propagates through mixing of actions at each $x$. Condition (d) follows from proposition~\ref{thm:truthtelling}, which identifies a measurable minimizer for each $\pi$.
\end{remark}

\subsection*{Nearly Bayesimax sequences}

In cases where a Bayesimax prior does not exist, a nearly Bayesimax sequence $(\pi_i)$ usually does. Accordingly, $\delta$ is \emph{nearly Bayesimax} if $R_S(\delta, \pi_i) = r_S(\pi_i)$ for some large $i$.

\begin{lemma}[Sequence existence]\label{thm:nbseq}
If $\Lambda$ is nonempty and $\sup_{\pi\in\Lambda} r_S(\pi)$ is finite, then there is a nearly Bayesimax sequence in $\Lambda$.
\end{lemma}

\begin{proof}
For each $i \in \mathbb{N}$, choose $\pi_i \in \Lambda$ so that $r_S(\pi_i) > \sup_{\pi\in\Lambda} r_S(\pi) - \frac{1}{i}$. Then $(\pi_i)$ is nearly Bayesimax. Since $\Lambda$ is nonempty and $\sup_{\pi\in\Lambda} r_S(\pi) < \infty$, such a choice must exist for all $i \in \mathbb{N}$.
\end{proof}

\section{Asymptotics for the logarithmic score}\label{sec:asymp}
We study the logarithmic score with i.i.d.\ data $X^{(n)}=(X_1,\dots,X_n)$ from a regular $d$-dimensional parametric model with Fisher information $\mathcal{I}(\cdot)$ and prior $\pi$. Let $\mathcal{I}_\pi(\cdot)$ be the negative Hessian of the log-density of $\pi$ w.r.t.\ Lebesgue measure.
Under local asymptotic normality~\citep{lecamyang2000}, when $\Theta = \theta$ the posterior is asymptotically $\mathcal{N}( \cdot , \Sigma_{\theta, n}^\pi)$, where $\Sigma_{\theta, n}^\pi = (n\mathcal{I}(\theta)+\mathcal{I}_\pi(\theta))^{-1}$.

\begin{proposition}[Asymptotic decomposition]\label{thm:asymp}
Given local asymptotic normality, for large $n$
\begin{equation}\label{eq:asymp}
\E_{X^{(n)} \sim m^\pi}\Big[H(\Pi^{\pi}_{X^{(n)}})\Big] \;\approx\; \frac{d}{2}\log(2 \pi e) - \frac{1}{2}\E_{\Theta \sim \pi}\Big[\log(\det[n\mathcal{I}(\Theta)+\mathcal{I}_\pi(\Theta)])\Big].
\end{equation}
\end{proposition}

\begin{proof}[Proof]
Obtain normal approximation via $2^\text{nd}$ order expansion of log-likelihood and log-prior about $\theta$, with $\Theta = \theta + \frac{h}{\sqrt{n}}$. Compute the entropy of $\mathcal{N}( \cdot , \Sigma_{\Theta, n}^\pi)$. No terms depend on $X^{(n)}$, hence integration over $X^{(n)} \sim m^\pi$ is just integration over $\Theta \sim \pi$ by the tower property, giving \eqref{eq:asymp}.
\end{proof}

\begin{remark}[The shape of Bayesimax priors]\label{rem:shape}
Only the second term in \eqref{eq:asymp} depends on $\pi$. Hence, Bayesimax solutions balance two objectives---concentrating on minimal Fisher information regions, and minimizing prior curvature---but their relative importance depends on sample size---$\mathcal{O}(n)$ v. $\mathcal{O}(1)$. Accordingly, as $n$ increases Bayesimax priors transition from reasonably flat to sharply concentrated, with probability mass transferring from high-information to low-information regions.
\end{remark}

\begin{remark}[Contra Jeffreys]\label{rem:jeffreys}
For \emph{very} large $n$, maximizing conditional entropy is equivalent to maximizing the cross-entropy of $\pi$ relative to the Jeffreys prior. The Bayesimax criterion is therefore diametrically opposed to the Jeffreys criterion in the asymptotic limit.
\end{remark}

\section{Conjugate prior examples}\label{sec:examples}
We illustrate the logarithmic score case in conjugate families; throughout, $n$ denotes sample size.

\subsection{Normal--Normal with known variance: $X^{(n)} \mid \Theta \sim \mathcal{N}(\Theta,\sigma^2)$ and $\Theta\sim\mathcal{N}(\mu,\tau^2)$}
The posterior is $\Theta\mid X^{(n)} \sim \mathcal{N}\!\big(\tilde\mu,\tilde\tau^2\big)$, with $\tilde\tau^2=(\tau^{-2}+n\sigma^{-2})^{-1}$ independent of $X^{(n)}$. Hence, the conditional entropy is
\begin{equation}
r_S(\pi)
= \tfrac12\log\big(2\pi e\,\tilde\tau^2\big).
\end{equation}
$r_S(\pi)$ is strictly increasing in $\tau^2$, with $\sup_{\pi} r_S(\pi) = \tfrac12\log\!\left(\frac{2\pi e\,\sigma^2}{n}\right)$ as $\tau^2\to\infty$. There is thus no proper maximizer; a nearly Bayesimax sequence has $\tau_i^2\to\infty$ (limiting flat prior). In this example, Fisher information ($n / \sigma^2$) does not vary with $\Theta$, meaning conditional entropy maximization asymptotically reduces to prior curvature minimization. As such, the (limiting) Bayesimax prior matches the maximum entropy, Jeffreys, and reference prior in this case.

\subsection{Beta--Bernoulli: $X^{(n)} \mid\Theta\sim\mathrm{Bernoulli}(\Theta)$ and $\Theta\sim\mathrm{Beta}(\alpha,\beta)$}\label{subsec:bb}
The posterior is $\Theta\mid X^{(n)} \sim \mathrm{Beta}(\tilde\alpha,\tilde\beta)$, where $\tilde\alpha =\alpha + \sum{X^{(n)}}$, and $\tilde\beta =\beta + n-\sum{X^{(n)}}$. Hence, the conditional entropy is
\begin{equation}
r_S(\pi)=\E_{X^{(n)} \sim m^\pi}\big[\log(B(\tilde\alpha,\tilde\beta)) - (\tilde\alpha-1)\psi(\tilde\alpha) - (\tilde\beta-1)\psi(\tilde\beta) + (\tilde\alpha+\tilde\beta-2)\psi(\tilde\alpha+\tilde\beta)\big].
\end{equation}
Differentiating inside the expectation yields gradients in terms of digamma/trigamma functions, enabling numerical optimization. Figure~\ref{fig:fig1} displays solutions for $n=1,...,200$. Consistent with remark~\ref{rem:shape}, when $n=1$ the Bayesimax prior closely approximates $U(0, 1)$, and as $n$ increases, the prior concentrates about $\Theta = \frac{1}{2}$ (since $\mathcal{I}(\Theta) = [\Theta(1 - \Theta)]^{-1}$) at an estimated rate of $\mathcal{O}(n^{-1/4})$.
\begin{figure}
  \includegraphics[width=\linewidth]{"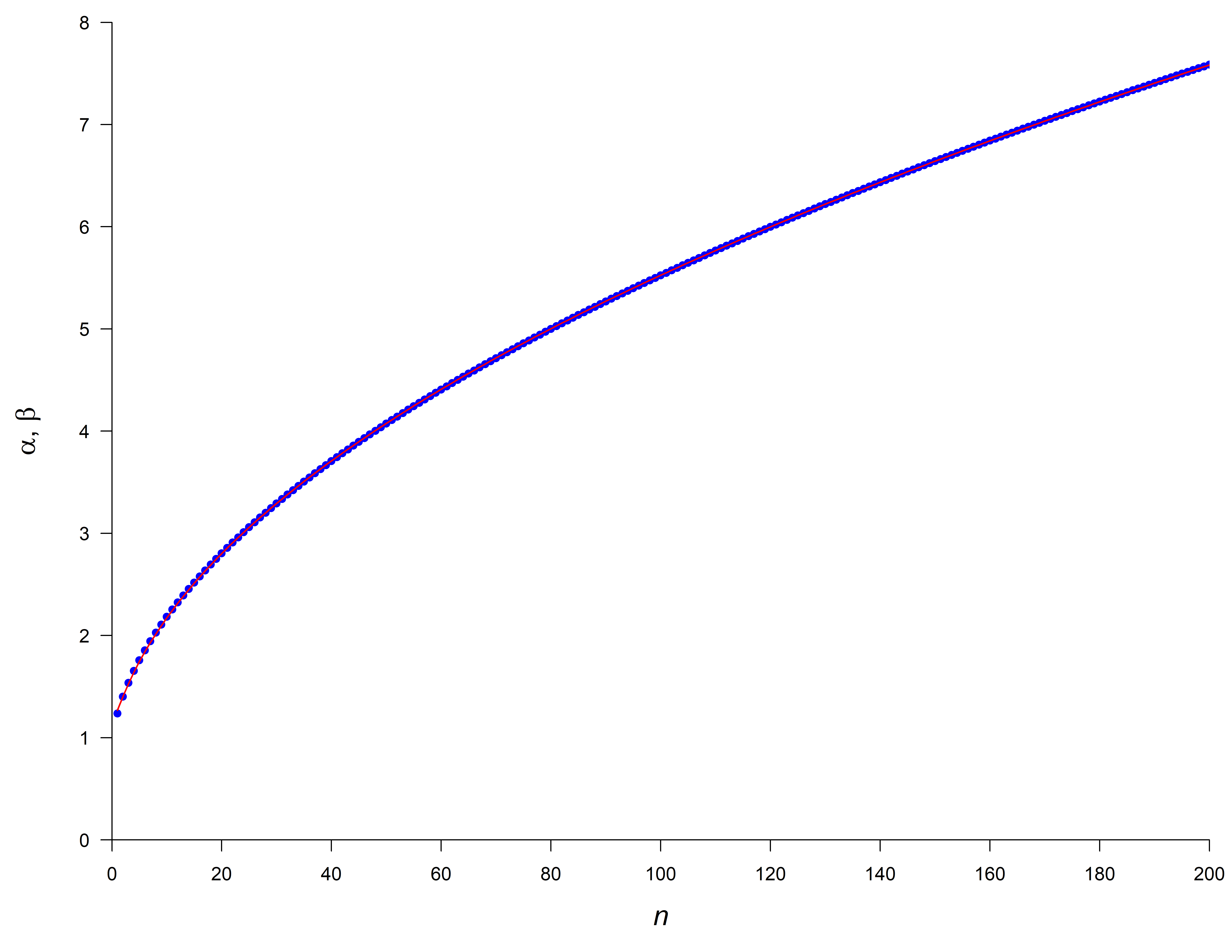"}
  \caption{Bayesimax priors for the Beta---Bernoulli model. For all $n$, maximization requires $\alpha = \beta$. Points (blue) display numerical solutions for $n=1,...,200$. Curve (red) displays a least-squares approximation, $\alpha = \beta = 0.5\sqrt{n} + 0.5 + \frac{0.3}{\sqrt{n}}$ (adj. $R^2 \approx 1$).}
  \label{fig:fig1}
\end{figure}

To evaluate the performance of the Bayesimax prior, we performed a simulation study. For each $\Theta \in \{0.01,...,0.99\}$, 100,000 datasets with $n = 100$ were sampled from the Bernoulli model. We estimated the mean squared error (MSE) of the maximum likelihood estimator, together with the posterior means under $\mathrm{Beta}(0.5, 0.5)$ (the Jeffreys/reference prior), $\mathrm{Beta}(1, 1)$ (the maximum entropy prior), and $\mathrm{Beta}(5.53, 5.53)$ (the Bayesimax prior). Figure~\ref{fig:fig2} displays the results.
\begin{figure}
  \includegraphics[width=\linewidth]{"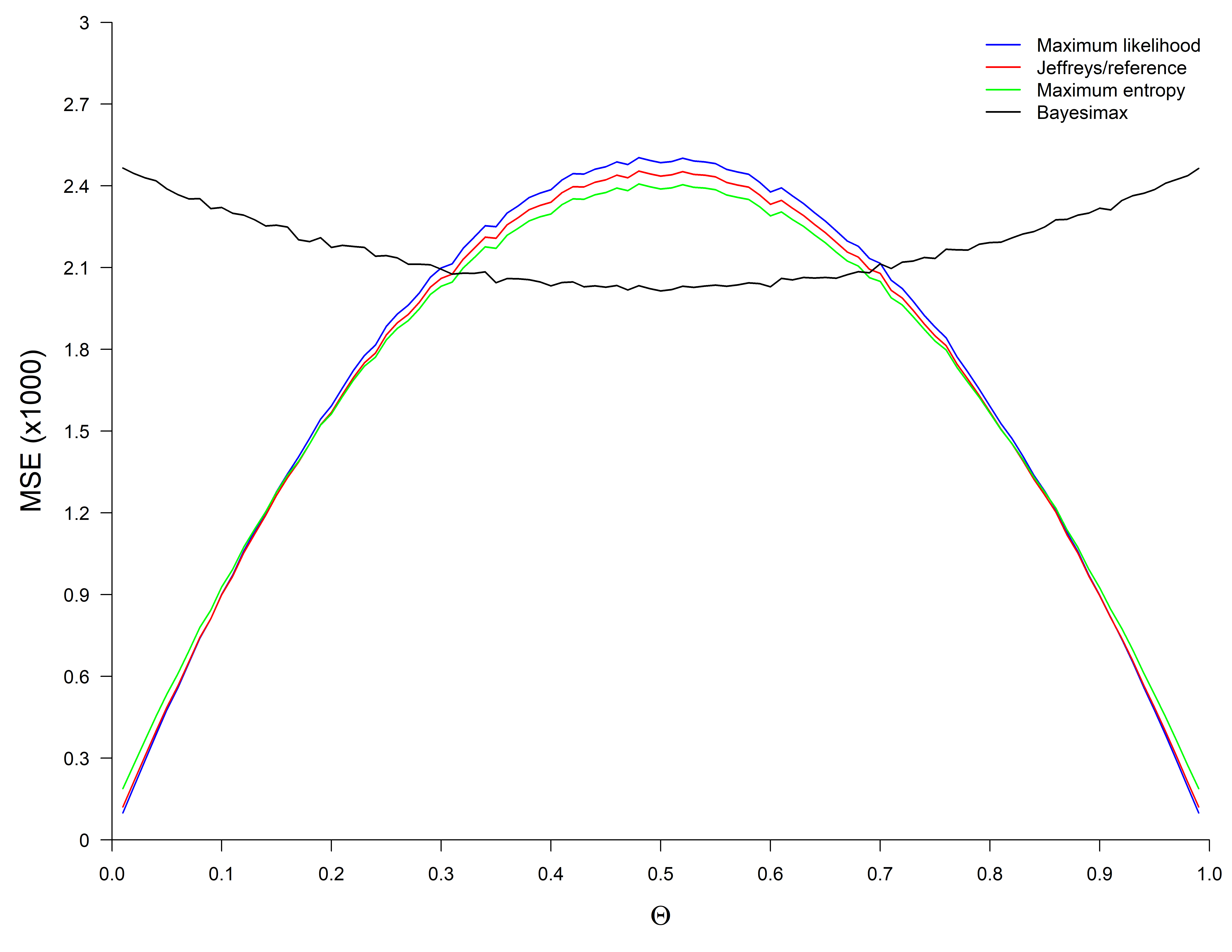"}
  \caption{Bernoulli model simulation study with $n = 100$. Each curve displays the estimated MSE of the relevant point estimator as a function of $\Theta$. For Jeffreys/reference, maximum entropy, and Bayesimax, the estimator is the corresponding posterior mean.}
  \label{fig:fig2}
\end{figure}

Unlike the other estimators, where performance rapidly improves as $\Theta$ approaches its extremes, the Bayesimax posterior mean performs best in a neighborhood of $\Theta = 0.5$, with slowly worsening MSE as $\Theta$ approaches $0$ and $1$. For large $n$, the least-squares estimate (figure~\ref{fig:fig1}) is roughly $\alpha = \beta \approx \sqrt{n/4}$. Notably, the MSE of the posterior mean under $\mathrm{Beta}(\sqrt{n/4},\sqrt{n/4})$ is constant~\citep{wasserman2004}, suggesting that the Bayesimax point estimator approximates asymptotic minimaxity with respect to squared-error loss.

\newpage\section{Estimating conditional entropy}\label{sec:practice}

We outline a simple, black-box Monte Carlo (MC) estimator of
\[
r_S(\pi)=\E_{X\sim m^\pi}\big[H_S(\Pi_X^\pi)\big],
\]
focusing on the logarithmic score where $H_S$ is (differential) Shannon entropy $H$. The method consists of an \emph{outer} prior predictive sampling loop, and an \emph{inner} posterior sampling and entropy estimation loop, utilizing a $k$-nearest-neighbor ($k$NN) entropy estimator.

\paragraph{Algorithm (Nested MC with $k$NN entropy).}
\begin{enumerate}\itemsep0.35ex
\item \textbf{Inputs:} prior $\pi$; likelihood $\{P_\theta\}$; posterior sampler $x\mapsto \text{Draw}(\Pi_x^\pi)$; integers $I$ (outer loop replications), $J$ (inner loop posterior draws), $k$ ($k$NN order).
\item \textbf{For $i=1,\ldots,I$:}
  \begin{enumerate}\itemsep0.25ex
  \item Sample $\theta_i\sim \pi$ and $x_i\sim P_{\theta_i}$.
  \item Draw $\theta_i^{(1)},\ldots,\theta_i^{(J)}\sim \Pi_{x_i}^\pi$ (e.g., MCMC with effective sample size $\mathrm{ESS}_i$).
  \item Compute $k$NN estimate $\widehat H(\Pi_{x_i}^\pi)$ from $\theta_i^{(\cdot)}$ (e.g., \cite{berrett2019}).
  \end{enumerate}
\item \textbf{Estimate:}
\[
\widehat r_S(\pi)\;=\;\frac{1}{I}\sum_{i=1}^{I}\widehat H(\Pi_{x_i}^\pi).
\]
\item \textbf{Standard error:}
\[
\mathrm{SE}[\widehat r_S(\pi)]\;=\;\Big[\frac{1}{I(I - 1)}\sum_{i=1}^{I}(\widehat H(\Pi_{x_i}^\pi) - \widehat r_S(\pi))^2\Big]^{\frac{1}{2}}.
\]
\end{enumerate}

An alternative estimator of $r_S(\pi)$ exploits proposition~\ref{prop:decomp} by separately estimating marginal entropy and mutual information from the samples in step 2(a). The primary advantage of the proposed algorithm is its simplicity and generality, but performance characteristics and optimal choice of $(I, J, k)$ necessarily depend on the model.

\subsection*{Example: intraclass correlation coefficient (ICC)}

Consider a matched pairs design where, for $s = 1, ..., n$ and $t = 1, 2$, $x_{st} = \mu + u_s + \varepsilon_{st}$, with \linebreak $u_s \mid \sigma_u^2 \sim \mathcal{N}(0, \sigma_u^2)$, and $\varepsilon_{st} \mid \sigma_\varepsilon^2 \sim \mathcal{N}(0, \sigma_\varepsilon^2)$. In terms of priors, let $\mu \sim \mathcal{N}(0, 1)$, $\sigma_u^2 \sim \mathrm{InvGamma}(1, \beta_u)$, and $\sigma_\varepsilon^2 \sim \mathrm{InvGamma}(1, \beta_\varepsilon)$. The parameter of interest is the intraclass correlation coefficient, $\rho = \frac{\sigma_u^2}{\sigma_u^2 + \sigma_\varepsilon^2}$. The conditional entropy of $\rho$ given $X$ was estimated via the proposed algorithm for low, medium, and high values of $n$, $\beta_u$, and $\beta_\varepsilon$, fixing $I = 100$, $J = 4000$, and $k = 8$. Posterior sampling was performed using Stan (\url{https://mc-stan.org/}), via the \emph{cmdstanr} package in R v. 4.4.1 (\url{https://www.r-project.org/}). Table~\ref{tbl:mcmc} displays the results. In these scenarios, our estimate of $r_S(\pi)$ peaks at either low $\beta_u$ with medium $\beta_\varepsilon$, or at medium $\beta_u$ with high $\beta_\varepsilon$, suggesting that Bayesimaxity favors priors where $\beta_\varepsilon$ is moderately larger than $\beta_u$.

\begin{table}
  \centering
  \caption{Conditional entropy estimates for the ICC in a matched pairs design.}
  \label{tbl:mcmc}
  \begin{tabular}{rrrcc}
    \toprule
    $n$ & $\beta_u$ & $\beta_\varepsilon$ & $\widehat r_S(\pi)$ & $\mathrm{SE}[\widehat r_S(\pi)]$ \\
    \midrule
      10 & 0.5 & 0.5 & -0.84 & 0.10 \\
      10 & 0.5 & 1 & -0.76 & 0.07 \\
      10 & 0.5 & 2 & -0.93 & 0.08 \\
      10 & 1 & 0.5 & -0.97 & 0.11 \\
      10 & 1 & 1 & -0.84 & 0.08 \\
      10 & 1 & 2 & -0.77 & 0.07 \\
      10 & 2 & 0.5 & -1.20 & 0.09 \\
      10 & 2 & 1 & -1.02 & 0.08 \\
      10 & 2 & 2 & -0.77 & 0.07 \\
      50 & 0.5 & 0.5 & -1.58 & 0.08 \\
      50 & 0.5 & 1 & -1.27 & 0.06 \\
      50 & 0.5 & 2 & -1.58 & 0.11 \\
      50 & 1 & 0.5 & -1.56 & 0.08 \\
      50 & 1 & 1 & -1.34 & 0.07 \\
      50 & 1 & 2 & -1.21 & 0.06 \\
      50 & 2 & 0.5 & -1.88 & 0.12 \\
      50 & 2 & 1 & -1.60 & 0.10 \\
      50 & 2 & 2 & -1.43 & 0.09 \\
      100 & 0.5 & 0.5 & -1.58 & 0.08 \\
      100 & 0.5 & 1 & -1.55 & 0.07 \\
      100 & 0.5 & 2 & -1.72 & 0.07 \\
      100 & 1 & 0.5 & -2.08 & 0.10 \\
      100 & 1 & 1 & -1.66 & 0.09 \\
      100 & 1 & 2 & -1.64 & 0.09 \\
      100 & 2 & 0.5 & -2.43 & 0.12 \\
      100 & 2 & 1 & -1.94 & 0.10 \\
      100 & 2 & 2 & -1.75 & 0.08 \\
    \bottomrule
  \end{tabular}
\end{table}

\section{Discussion}\label{sec:discussion}

\paragraph{Summary of contributions.}
We introduced prior disclosure games (\S\ref{sec:game}), studying those leveraging strictly proper scoring rules \citep{gneiting2007,dawid2012,dawid2015}, and performed minimax analysis to motivate Bayesimax priors, which maximize the minimum Bayes risk $r_S(\pi)$ in such games. We further showed that Bayesimax priors maximize conditional (generalized) entropy of the parameter given the data. We clarified conditions under which priors are identifiable and least favorable priors exist (\S\ref{sec:existence}); when these fail, nearly Bayesimax sequences provide a workable approximation (lemma~\ref{thm:nbseq}). In the case of i.i.d.\ data from regular parametric models, we obtained a simple asymptotic approximation for conditional Shannon entropy (proposition~\ref{thm:asymp}), clarifying how the Bayesimax objective contrasts with classical devices in its treatment of Fisher information (remarks~\ref{rem:shape} and \ref{rem:jeffreys}). We illustrated the criterion’s behavior in elementary models with conjugate family examples (\S\ref{sec:examples}), and described a black-box Monte Carlo estimator for $r_S(\pi)$ compatible with commonly used Markov chain Monte Carlo algorithms (\S\ref{sec:practice}). Our proposal recalls, but deviates in important respects, from several others in the literature.

\paragraph{Jeffreys priors.}
Jeffreys’ rule \citep{jeffreys1946} selects a prior density proportional to $ \sqrt{\det \mathcal{I}(\Theta)}$ for the sake of parameterization invariance. In very large samples under logarithmic scoring, maximizing $r_S(\pi)$ amounts to maximizing the cross-entropy of $\pi$ relative to the Jeffreys prior (remark~\ref{rem:jeffreys}). The Bayesimax criterion is thus diametrically opposed to Jeffreys' rule in the asymptotic limit. Mechanically, Bayesimax priors downweight regions where the Fisher information is high, while Jeffreys priors upweight these regions. At a conceptual level, Jeffreys' rule is concerned with a prior-focused geometric criterion, while Bayesimax theory is concerned not only with how informative the prior is, but also how informative the prior renders the data.

\paragraph{Maximum entropy priors.}
Jaynes’ maximum entropy program \citep{jaynes1957} chooses $\pi$ so as to maximize $H(\pi)$ subject to moment constraints. Under the log score, the Bayesimax objective is the conditional, rather than the marginal, Shannon entropy. Proposition~\ref{prop:decomp} reveals that Bayesimax theory trades off prior informativeness against data informativeness, meaning that it differs from maximizing prior entropy to the extent that this increases mutual information. Thus, while the Bayesimax criterion is partly concerned with prior entropy, its competing concern with data informativeness renders it qualitatively different from Jaynes' criterion.

\paragraph{Reference priors.}
Reference priors \citep{bernardo1979,bergerbernardo1992,kasswasserman1996,bergersun2009,consonni2018} maximize the expected information gain $\E_{X \sim m^\pi}[\KL(\Pi_X^\pi\|\pi)]$. In the case of a logarithmic score, Bayesimax priors \emph{minimize} this quantity (for fixed prior entropy). This inversion does not ensure disagreement, since Bayesimax priors balance this objective against maximizing prior entropy, but given the close relationship between Jeffreys priors and reference priors (e.g., \cite{clarkebarron1994}), disagreement is likely the norm rather than the exception (remark~\ref{rem:jeffreys}).

\paragraph{Robust Bayes.}
The robust Bayes literature investigates least favorable priors for the purpose of identifying statistical procedures which enjoy both Bayesian and minimax optimality \citep{berger1985,berger1994robust,insua2000}. Our construction yields least favorable priors specifically for prior disclosure games, rather than for the statistical decision problem of primary interest. Hence, while our approach borrows a powerful technique from the robust Bayes tradition, our objective is identifying general-purpose noninformative priors, as opposed to a diverse array of problem-specific minimaxity-conferring priors.

\paragraph{Limitations.}
Several caveats deserve emphasis. (i) \emph{Choice of score:} Different strictly proper scoring rules may identify different classes of Bayesimax priors. The log score is a natural default, but it may be that a prior which remains Bayesimax under varying scoring rules is preferable to one that is Bayesimax only under the log score. (ii) \emph{Reparameterization:} Conditional differential entropy lacks reparameterization invariance, so Bayesimax solutions may vary across parameterizations. (iii) \emph{Existence:} In many realistic problems, a Bayesimax prior may not exist; nearly Bayesimax sequences will often, but not always, perform comparably. (iv) \emph{Computation.} In high dimensions, $k$NN entropy estimators may underperform, and MCMC mixing errors may propagate. While the proposed estimator in \S\ref{sec:practice} is plug-and-play, its performance in specific applications must be carefully evaluated. (v) \emph{Design.} Bayesimax priors may vary with sample size or other design parameters, as in the Beta--Bernoulli example (\ref{subsec:bb}), in ways that competing proposals do not. Addressing these complexities is a promising direction for future research on Bayesimax theory.

\section*{Acknowledgments}
The idea of applying minimax theory to prior disclosure games, and the general solution strategy described above, were fully developed by the author. Detailed mechanics of the solution, proofs of key results, and solution characterizations were developed in a series of exchanges with OpenAI’s \emph{GPT-5 Thinking} large language model \citep{openai:gpt5thinking:2025}. A complete first draft, informed by these exchanges, was generated by the model. The present version represents a comprehensive revision by the author for accuracy, clarity, depth, and style. Had these been the contributions of a peer, co-authorship would be warranted, but the author takes full responsibility for the material presented. We would like to thank Leon Bergen and Martin Flores for helpful comments on previous drafts.

\bibliographystyle{plainnat}

\end{document}